\newcommand{\keywords}[1]{\textbf{Keywords:} #1}
\newtheorem{theorem}{Theorem}[section]
\newtheorem{corollary}{Corollary}[theorem]
\newtheorem{lemma}[theorem]{Lemma}
\theoremstyle{definition}
\theoremstyle{remark}
\title{\textbf{A graph product and its Application}}
\author[1]{Bishal Sonar\thanks{Email: bsonarnits@gmail.com}}
\author[2]{Ravi Srivastava\thanks{Corresponding author Email: ravi@nitsikkim.ac.in}}
\affil[1,2]{Department of Mathematics, National Institute of Technology Sikkim, Sikkim 737139, India}
\date{}
\begin{document}

\parskip1ex
\parindent0pt
\maketitle

\begin{abstract}
    \noindent The spectrum of Laplacian and signless Laplacian matrix for a graph product is obtained, where both underlying graphs are regular. As an application of this, we have been able to generate the Kirchhoff Index and Wiener Index and determine the number of spanning trees. Additionally, we derived the conditions necessary for obtaining a Laplacian and signless Laplacian integral product graph.
\end{abstract}

\textbf{MSC2020 Classification:} 05C76, 05C50, 05C22\\
\keywords{Laplacian, signless Laplacian, Kirchhoff Index, Wiener Index, Spanning tree, integral graph.}

\section{Introduction}
    In graph theory, a graph $\mathcal{G}$ is represented as an ordered pair $(\mathcal{V}, \mathcal{E})$, where $\mathcal{V} = \{v_i : i = 1, 2, \ldots, n\}$ is the set of vertices, and $\mathcal{E} = \{e_i : i = 1, 2, \ldots, m\}$ is the set of edges. The degree of a vertex $v_i$, denoted as $d(v_i)$, is defined as the number of edges incident to $v_i$. The adjacency matrix $A_{\mathcal{G}} = (a_{ij})_{n\times n}$ is a key representation of the graph $\mathcal{G}$, where each entry $a_{ij} = 1$ if there is an edge connecting vertex $v_i$ and vertex $v_j$, and $a_{ij} = 0$ otherwise. Further, two important matrices associated with a graph $\mathcal{G}$ are introduced: the Laplacian matrix $L_{\mathcal{G}} = D_{\mathcal{G}} - A_{\mathcal{G}}$, and the signless Laplacian matrix $Q_{\mathcal{G}} = D_{\mathcal{G}} + A_{\mathcal{G}}$. Here, $D_{\mathcal{G}}$ is a diagonal matrix where the $i^{th}$ diagonal entry is $d(v_i)$, the degree of the vertex $v_i$. The shortest path length between any two vertices $v_i,v_j$ of the graph $\mathcal{G}$ is denoted by $d(v_i,v_j)$. And $D(\mathcal{G})=[d(v_i,v_j)]_{n\times n}$ denotes the distance matrix of $\mathcal{G}$. A graph $\mathcal{G}$ is classified as Laplacian Integral or Signless Laplacian Integral if all the eigenvalues of its Laplacian or signless Laplacian matrix, respectively, are integers. This discussion focuses exclusively on simple, undirected, and finite graphs.

    Various graph products, such as cartesian, tensor, strong, and many more, have been studied. In 1970, R Frucht and F Harary~\cite{frucht1970corona} first defined the corona product for graphs. Later, Cam McLeman and Erin McNicholas~\cite{mcleman2011spectra} introduced the coronal of graphs to obtain the adjacency spectra for arbitrary graphs. Then Shu and Gui~\cite{cui2012spectrum} generalized it and defined the coronal of the Laplacian and the signless Laplacian matrix of graphs. In 2023, SP Joseph~\cite{joseph2023graph} introduced a new product and worked on its adjacency spectra and its application in generating non-cospectral equienergetic graphs. However, work still needs to be done for Laplacian and signless Laplacian spectra. We calculated the Laplacian and signless Laplacian spectrum, generated the Kirchhoff Index, and determined the number of spanning trees. Additionally, we derived the conditions necessary for obtaining a Laplacian and signless Laplacian integral graph of $\mathcal{G}_1\circledast\mathcal{G}_2$.

    In the upcoming sections, you will find preliminary results, followed by the definition in the third section. The fourth section delves into the spectrum of the Laplacian and signless Laplacian, while the fifth section explores various applications of the graph product.

    \section{Preliminaries}
    \begin{lemma}\label{L2}~\cite{bapat2010graphs}
        Let $\mathcal{S}_1,\mathcal{S}_2,\mathcal{S}_3,$ and $\mathcal{S}_4$ be matrix of order $q_1\times q_1,~ q_1\times q_2, ~q_2\times q_1, ~q_2\times q_2$ respectively with $\mathcal{S}_1$ and $\mathcal{S}_4$ are invertible. Then 
            \begin{align*}
                \det\begin{bmatrix}
                \mathcal{S}_1&\mathcal{S}_2\\\mathcal{S}_3&\mathcal{S}_4
        \end{bmatrix}&=\det(\mathcal{S}_1)\det(\mathcal{S}_4-\mathcal{S}_3\mathcal{S}_1^{-1}\mathcal{S}_2)\\
        &=\det(\mathcal{S}_4)\det(\mathcal{S}_1-\mathcal{S}_2\mathcal{S}_4^{-1}\mathcal{S}_3)
        \end{align*}
    \end{lemma}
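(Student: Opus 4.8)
The plan is to prove both equalities by exhibiting an explicit block triangular factorization of the matrix and then using the multiplicativity of the determinant together with two elementary facts: the determinant of a block upper (or lower) triangular matrix is the product of the determinants of its diagonal blocks, and a block triangular matrix whose diagonal blocks are identity matrices has determinant one. The hypotheses that $\mathcal{S}_1$ and $\mathcal{S}_4$ are invertible are exactly what make the two Schur complements $\mathcal{S}_4-\mathcal{S}_3\mathcal{S}_1^{-1}\mathcal{S}_2$ and $\mathcal{S}_1-\mathcal{S}_2\mathcal{S}_4^{-1}\mathcal{S}_3$ well defined and the factorizations legitimate.

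For the first equality I would verify, by direct block multiplication, the factorization
\begin{align*}
\begin{bmatrix}\mathcal{S}_1&\mathcal{S}_2\\\mathcal{S}_3&\mathcal{S}_4\end{bmatrix}
=\begin{bmatrix}I_{q_1}&0\\\mathcal{S}_3\mathcal{S}_1^{-1}&I_{q_2}\end{bmatrix}
\begin{bmatrix}\mathcal{S}_1&0\\0&\mathcal{S}_4-\mathcal{S}_3\mathcal{S}_1^{-1}\mathcal{S}_2\end{bmatrix}
\begin{bmatrix}I_{q_1}&\mathcal{S}_1^{-1}\mathcal{S}_2\\0&I_{q_2}\end{bmatrix}.
\end{align*}
Carrying out the product of the last two factors first returns the block upper triangular matrix with blocks $\mathcal{S}_1,\mathcal{S}_2$ on top and $0,\mathcal{S}_4-\mathcal{S}_3\mathcal{S}_1^{-1}\mathcal{S}_2$ below; left-multiplying by the unit lower triangular factor then restores the $(2,1)$ block to $\mathcal{S}_3$ and the $(2,2)$ block to $\mathcal{S}_4$, confirming the identity. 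Taking determinants, the two unitriangular outer factors each contribute $1$, and the middle block diagonal factor contributes $\det(\mathcal{S}_1)\det(\mathcal{S}_4-\mathcal{S}_3\mathcal{S}_1^{-1}\mathcal{S}_2)$, which is precisely the first claimed value.

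The second equality follows by the symmetric argument, factoring out $\mathcal{S}_4$ instead of $\mathcal{S}_1$:
\begin{align*}
\begin{bmatrix}\mathcal{S}_1&\mathcal{S}_2\\\mathcal{S}_3&\mathcal{S}_4\end{bmatrix}
=\begin{bmatrix}I_{q_1}&\mathcal{S}_2\mathcal{S}_4^{-1}\\0&I_{q_2}\end{bmatrix}
\begin{bmatrix}\mathcal{S}_1-\mathcal{S}_2\mathcal{S}_4^{-1}\mathcal{S}_3&0\\0&\mathcal{S}_4\end{bmatrix}
\begin{bmatrix}I_{q_1}&0\\\mathcal{S}_4^{-1}\mathcal{S}_3&I_{q_2}\end{bmatrix},
\end{align*}
and taking determinants in the same way yields $\det(\mathcal{S}_4)\det(\mathcal{S}_1-\mathcal{S}_2\mathcal{S}_4^{-1}\mathcal{S}_3)$. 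Since both expressions equal the single quantity $\det\begin{bmatrix}\mathcal{S}_1&\mathcal{S}_2\\\mathcal{S}_3&\mathcal{S}_4\end{bmatrix}$, they are equal to each other as well, completing the proof. The only genuine care point, rather than a real obstacle, is checking the block products accurately and making sure the inverses are placed on the correct side, since matrix multiplication is noncommutative; everything else is a routine consequence of the determinant's multiplicative property.
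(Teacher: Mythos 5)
Your proof is correct: both block factorizations check out, the unitriangular outer factors have determinant one, and the invertibility hypotheses are used exactly where needed to form the Schur complements. The paper itself gives no proof of this lemma---it is quoted from the cited reference (Bapat, \emph{Graphs and Matrices})---and your argument is precisely the standard Schur-complement factorization found there, so there is nothing further to reconcile.
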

        \subsection{Kronecker product}
        Consider two matrices $\mathcal{P}=(p_{ij})$ of size $p_1\times q_1$ and $\mathcal{Q}=(q_{ij})$ of size $p_2\times q_2$. The Kronecker product, denoted as $\mathcal{P}\otimes \mathcal{Q}$, is a powerful mathematical operation that results in a new matrix of dimensions $p_1p_2 \times q_1q_2$. This operation is performed by multiplying each element $p_{ij}$ of matrix $\mathcal{P}$ by the entire matrix $\mathcal{Q}$, effectively scaling $\mathcal{Q}$ by $p_{ij}$. As outlined by Neumaier (1992)~\cite{neumaier1992horn}, this concept provides a structured way to construct a larger matrix from two smaller matrices, finding applications in various fields such as signal processing and theoretical physics.\\ \\
        \textbf{Properties:} \vspace{- 0.3 cm}
        \begin{itemize}
            \item It is associative.
            \item $(\mathcal{P}\otimes \mathcal{Q})^T=\mathcal{P}^T\otimes \mathcal{Q}^T$.
            \item $(\mathcal{P}\otimes \mathcal{Q})(\mathcal{R} \otimes \mathcal{S})=\mathcal{PR}\otimes \mathcal{QS}$, given the product $P\mathcal{R}$ and $\mathcal{Q}\mathcal{S}$ exists. 
            \item $(\mathcal{P}\otimes \mathcal{Q})^{-1}=\mathcal{P}^{-1}\otimes \mathcal{Q}^{-1}$, given $\mathcal{P}$ and $\mathcal{Q}$ are non-singular matrices.
            \item If $\mathcal{P}$ and $\mathcal{Q}$ are $p\times p$ and $q\times q$ matrices, then $\det(\mathcal{P}\times \mathcal{Q})=(\det \mathcal{P})^q(\det \mathcal{Q})^p.$
        \end{itemize}

        \subsection{Definition}~\cite{mcleman2011spectra}\label{Def2}
            Let $\mathcal{G}$ be a graph with $n$ vertices. The coronal of $\mathcal{G}$, denoted by $\chi_{\mathcal{G}}(x)$, is defined as the sum of all the entries in the matrix $(xI_n - A_{\mathcal{G}})^{-1}$, where $A_{\mathcal{G}}$ is the adjacency matrix of the graph $\mathcal{G}$, and $I_n$ is the identity matrix of order $n$.
            \begin{equation}
                \chi_{\mathcal{G}}(x)=\textbf{1}_n^T(xI_n-A_{\mathcal{G}})^{-1}\textbf{1}_n,
            \end{equation}
            where $\textbf{1}_n$ is all one column matrix.\\
            The coronal of Laplacian and signless Laplacian matrix was introduced by Shu and Gui \cite{cui2012spectrum} and is given by $\chi_{L_{\mathcal{G}}}=1_n(xI_n-L_{\mathcal{G}})^{-1}1_n$ and $\chi_{Q_{\mathcal{G}}}=1_n(xI_n-Q_{\mathcal{G}})^{-1}1_n$ respectively.
        
\section{Spectral properties}
        
    \subsection{Definition}~\cite{joseph2023graph}~\label{Def1}
        Let $\mathcal{G}_1=(\mathcal{V}_1,\mathcal{E}_1)$ be a graph of size $e_1$ and order $n_1$ with a set of vertex $\{u_i:i=1,2,\cdots,n_1\}$ and $\mathcal{G}_2=(\mathcal{V}_2,\mathcal{E}_2)$ be a graph of size $e_2$ and order $n_2$ with vertex set $\{v_i:1,2,\cdots,n_2\}$. Then, the graph product is defined as follows:
        \begin{enumerate}
            \item The vertex set of $\mathcal{G}_1\circledast\mathcal{G}_2$ is given by\\
           $\{a_{11},a_{12},\cdots,a_{1n_2},a_{21},a_{22},\cdots,a_{2n_2},\cdots,a_{n_11},a_{n_12},\cdots,a_{n_1n_2},b_{11},b_{12},\cdot,b_{1n_2},b_{21},b_{22},\cdots, \\ b_{2n_2},\cdots,b_{n_11},b_{n_12},\cdots,b_{n_1n_2}\}$.
           \item The set of edges of $\mathcal{G}_1\circledast\mathcal{G}_2$ consists of the following three types of edges:\\
           \begin{itemize}\vspace{-.5cm}
               \item If the edge $(u_i,u_j)\in E(\mathcal{G}_1)$, then the edges $(a_{ik},a_{jl});$ for $1\leq k,l\leq n_2$ belong to $\mathcal{G}_1\circledast\mathcal{G}_2$.
               \item If the edge $(v_i,v_j)\in E(\mathcal{G}_2)$, then the edges $(b_{ri},b_{rj});$ for $1\leq r\leq n_1$ belongs to $\mathcal{G}_1\circledast\mathcal{G}_2$.
               \item For $1\leq i\leq n_1$, the edges $(a_{ip},b_{iq});$ for $1\leq p,q \leq n_2$ belongs to $\mathcal{G}_1\circledast\mathcal{G}_2$.
           \end{itemize}
        \end{enumerate}

    \begin{figure}
            \centering
            \includegraphics[scale=0.75]{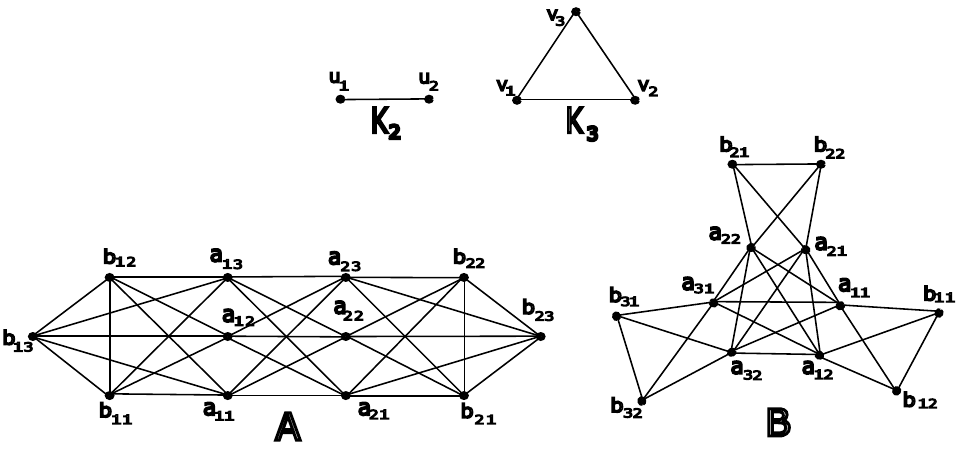}
            \caption{$A=K_2\circledast K_3$ and $B=K_3\circledast K_2.$}
            \label{F1}
    \end{figure}
        
    The product graph represents the total vertices count by $2n_1n_2$. When it comes to the edges, there are different types, each with its own count: $n_2^2e_1$ edges of the first type, $n_1e_2$ edges of the second type, and $n_1n_2^2$ edges of the third type. Therefore, to sum up, the overall number of edges in the product graph can be calculated using the formula $n_2^2(n_1+e_1)+n_1e_2$.\\

\subsection{Laplacian Spectrum for Regular graphs}

        \begin{theorem}~\label{Th3}
            Let $\mathcal{G}_1$ and $\mathcal{G}_2$ be two regular graphs of order $n_1$ and $n_2$ with regularity $r_1$ and $r_2$ respectively. Also let the Laplacian eigenvalues be $\mu_{i1},\mu_{i2},\ldots,\mu_{in_i}$, $i=1,2.$ Then the Laplacian characteristic polynomial of $\mathcal{G}_1\circledast \mathcal{G}_2$ is\\
            $f(L_{\mathcal{G}_1\circledast \mathcal{G}_2},x)=(x-r_1n_2-n_2)^{n_1(n_2-1)}f(L_{\mathcal{G}_2},(x-n_2))^{n_1}\cdot \prod_{i=1}^{n_1}\big[x-n_2-n_2\mu_{1i}-n_2\chi_{L_{\mathcal{G}_2}}(x-n_2).$
        \end{theorem}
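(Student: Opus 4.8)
The plan is to realize the Laplacian of $\mathcal{G}_1\circledast\mathcal{G}_2$ as a $2\times 2$ block matrix built from Kronecker products and then reduce its characteristic determinant by a Schur complement. Ordering the vertices so that all $a$-vertices precede all $b$-vertices, the three edge types give adjacency blocks $A_{\mathcal{G}_1}\otimes J_{n_2}$ (type 1, among the $a$-layer), $I_{n_1}\otimes A_{\mathcal{G}_2}$ (type 2, inside each $b$-group), and $I_{n_1}\otimes J_{n_2}$ for the $a$--$b$ coupling (type 3), where $J_{n_2}=\mathbf{1}_{n_2}\mathbf{1}_{n_2}^{T}$. Counting incident edges shows every $a$-vertex has degree $n_2(r_1+1)$ and every $b$-vertex has degree $r_2+n_2$, so the degree matrix is scalar on each layer. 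Hence
\begin{equation*}
xI-L_{\mathcal{G}_1\circledast\mathcal{G}_2}=\begin{bmatrix} M_1 & M_2 \\ M_3 & M_4\end{bmatrix},
\end{equation*}
with $M_1=(x-n_2(r_1+1))I_{n_1n_2}+A_{\mathcal{G}_1}\otimes J_{n_2}$, $M_2=M_3=I_{n_1}\otimes J_{n_2}$, and, crucially, $M_4=I_{n_1}\otimes\big[(x-n_2)I_{n_2}-L_{\mathcal{G}_2}\big]$ after substituting $A_{\mathcal{G}_2}=r_2I_{n_2}-L_{\mathcal{G}_2}$.

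Next I would apply Lemma~\ref{L2} in the form $\det(xI-L)=\det(M_4)\det(M_1-M_2M_4^{-1}M_3)$, legitimate for all $x$ outside the spectrum of $M_4$; since both sides are functions of $x$, the resulting identity persists everywhere by continuity. The factor $\det(M_4)=f(L_{\mathcal{G}_2},x-n_2)^{n_1}$ follows immediately from the Kronecker determinant property. The heart of the argument is the middle term: using $(P\otimes Q)(R\otimes S)=PR\otimes QS$ and the rank-one identity $J_{n_2}BJ_{n_2}=(\mathbf{1}_{n_2}^{T}B\mathbf{1}_{n_2})J_{n_2}$, one obtains
\begin{equation*}
M_2M_4^{-1}M_3=\Big(\mathbf{1}_{n_2}^{T}\big[(x-n_2)I_{n_2}-L_{\mathcal{G}_2}\big]^{-1}\mathbf{1}_{n_2}\Big)\,(I_{n_1}\otimes J_{n_2})=\chi_{L_{\mathcal{G}_2}}(x-n_2)\,(I_{n_1}\otimes J_{n_2}),
\end{equation*}
which is exactly where the coronal of the Laplacian enters and explains its appearance in the claimed formula.

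It then remains to compute the determinant of the Schur complement
\begin{equation*}
S=(x-n_2(r_1+1))I_{n_1n_2}+A_{\mathcal{G}_1}\otimes J_{n_2}-\chi_{L_{\mathcal{G}_2}}(x-n_2)\,(I_{n_1}\otimes J_{n_2}).
\end{equation*}
Here I would exploit that $J_{n_2}$ has the single nonzero eigenvalue $n_2$ on $\mathbf{1}_{n_2}$ and $0$ on its orthogonal complement. On the complement ($n_1(n_2-1)$ dimensions) every $J_{n_2}$-term vanishes and $S$ acts as the scalar $x-r_1n_2-n_2$, producing the factor $(x-r_1n_2-n_2)^{n_1(n_2-1)}$. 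On the $\mathbf{1}_{n_2}$-component ($n_1$ dimensions) $S$ restricts to $(x-n_2(r_1+1))I_{n_1}+n_2A_{\mathcal{G}_1}-n_2\chi_{L_{\mathcal{G}_2}}(x-n_2)I_{n_1}$; because $\mathcal{G}_1$ is $r_1$-regular, $A_{\mathcal{G}_1}$ is diagonalizable with eigenvalues $r_1-\mu_{1i}$, and a direct substitution collapses each eigenvalue to $x-n_2-n_2\mu_{1i}-n_2\chi_{L_{\mathcal{G}_2}}(x-n_2)$, giving the final product over $i$. Multiplying the three factors reproduces the stated characteristic polynomial.

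The main obstacle I anticipate is organizing the middle step cleanly: one must substitute $A_{\mathcal{G}_2}=r_2I_{n_2}-L_{\mathcal{G}_2}$ at the right moment so that $M_4$ becomes a shift of $L_{\mathcal{G}_2}$, which is what makes both $\det(M_4)$ and the coronal appear naturally, and one must use regularity twice---once on $\mathcal{G}_2$ so that $\mathbf{1}_{n_2}$ lets $J_{n_2}$ interact cleanly with the resolvent, and once on $\mathcal{G}_1$ so that $A_{\mathcal{G}_1}\otimes J_{n_2}$ and $I_{n_1}\otimes J_{n_2}$ are simultaneously diagonalized. The bookkeeping of multiplicities and the polynomial-continuity justification for the Schur step are routine by comparison.
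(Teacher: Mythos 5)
Your proposal is correct and follows essentially the same route as the paper's proof: the same block Kronecker form of $L_{\mathcal{G}_1\circledast\mathcal{G}_2}$, the same Schur-complement reduction via Lemma~\ref{L2}, the coronal $\chi_{L_{\mathcal{G}_2}}(x-n_2)$ arising from the middle term, and the final factorization from the spectrum of $J_{n_2}$ (eigenvalue $n_2$ once, $0$ with multiplicity $n_2-1$) combined with $\lambda_{1i}=r_1-\mu_{1i}$. Your explicit eigenspace splitting of $J_{n_2}$ and the continuity justification for the Schur step are just more careful renderings of what the paper does by citing eigenvalues of Kronecker products.
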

        \begin{proof}
            Following the definition~\ref{Def1} of $\mathcal{G}_1\circledast \mathcal{G}_2$, we have
            $$A_{\mathcal{G}_1\circledast \mathcal{G}_2}=\begin{bmatrix}
                A_{\mathcal{G}_1}\otimes J_{n_2} &I_{n_1}\otimes J_{n_2} \\
                I_{n_1}\otimes J_{n_2} &I_{n_1}\otimes A_{\mathcal{G}_2}
            \end{bmatrix}.$$
            $$D_{\mathcal{G}_1\circledast \mathcal{G}_2}= \begin{bmatrix}
                n_2(r_1+1)I_{n_1n_2} & 0_{n_1n_2}\\
                0_{n_1n_2} & (r_2+n_2)I_{n_1n_2}
            \end{bmatrix}.$$
            The Laplacian matrix of $\mathcal{G}_1\circledast \mathcal{G}_2$ is
            $$L_{\mathcal{G}_1\circledast \mathcal{G}_2}=\begin{bmatrix}
                n_2(r_1+1)I_{n_1n_2}-A_{\mathcal{G}_1}\otimes J_{n_2} &-I_{n_1}\otimes J_{n_2} \\
                -I_{n_1}\otimes J_{n_2} &I_{n_1}\otimes (n_2I_{n_2}+L_{\mathcal{G}_2})
            \end{bmatrix}.$$
            Then using Lemma~\ref{L2} the characteristics polynomial of Laplacian matrix of $\mathcal{G}_1\circledast \mathcal{G}_2$ is,\\
            \begin{align}
                &f(L_{\mathcal{G}_1\circledast \mathcal{G}_2},x)=\det\big[xI_{2n_1n_2}-L_{\mathcal{G}_1\circledast \mathcal{G}_2}\big]\notag\\
                &=\det\begin{bmatrix}
                    (x-n_2r_1-n_2)I_{n_1n_2}+A_{\mathcal{G}_1}\otimes J_{n_2} &I_{n_1}\otimes J_{n_2}\notag\\
                I_{n_1}\otimes J_{n_2} &I_{n_1}\otimes(x-n_2)I_{n_2}-L_{\mathcal{G}_2}
                \end{bmatrix}\notag \\ 
                &=\det\big[I_{n_1}\otimes(x-n_2)I_{n_2}-L_{\mathcal{G}_2}\big] \cdot\det\Big[\{(x-n_2r_1-n_2)I_{n_1n_2}+A_{\mathcal{G}_1}\otimes J_{n_2}-\{I_{n_1}\otimes J_{n_2}\}\notag \\ &\{I_{n_1}\otimes(x-n_2)I_{n_2}-L_{\mathcal{G}_2}\}^{-1}\{I_{n_1}\otimes J_{n_2})\}\Big]\notag\\
                &=\det(I_{n_1})^{n_2}\cdot\det\big[(x-n_2)I_{n_2}-L_{\mathcal{G}_2}\big]^{n_1}\cdot \det\Big[\{(x-r_1n_2-n_2)I_{n_1n_2}+A_{\mathcal{G}_1}\otimes J_{n_2}\}+\notag\\
                &\{I_{n_1}\otimes\chi_{L_{\mathcal{G}_2}}(x-n_2)J_{n_2}\}\big]\notag\\
                &=f(L_{\mathcal{G}_2},(x-n_2))^{n_1}\cdot\prod((x-r_1n_2-n_2)+\alpha_i\beta_j), \text{here the product}\text{is over the eigenvalues } \notag\\
                &{\alpha_i}'s \text{ and } {\beta_j}'s \text{ of } A_{\mathcal{G}_1}-\chi_{L_{\mathcal{G}_2}}(x-n_2)I_{n_1} \text{ and } J_{n_2}\text{respectively.}\notag \\
                &=(x-r_1n_2-n_2)^{n_1(n_2-1)}f(L_{\mathcal{G}_2},(x-n_2))^{n_1}\cdot \prod_{i=1}^{n_1}\big[(x-r_1n_2-n_2)+n_2(\lambda_{1i}-\chi_{L_{\mathcal{G}_2}}(x-n_2))\big] \notag\\
                &=(x-r_1n_2-n_2)^{n_1(n_2-1)}f(L_{\mathcal{G}_2},(x-n_2))^{n_1}\cdot \prod_{i=1}^{n_1}\big[x-n_2-n_2\mu_{1i}-n_2\chi_{L_{\mathcal{G}_2}}(x-n_2)\big]\label{Eqn4}
            \end{align}
           
          \noindent So, $f(L_{\mathcal{G}_1\circledast \mathcal{G}_2},x)=(x-r_1n_2-n_2)^{n_1(n_2-1)}f(L_{\mathcal{G}_2},(x-n_2))^{n_1}\cdot \prod_{i=1}^{n_1}\big[x-n_2-n_2\mu_{1i}-n_2\chi_{L_{\mathcal{G}_2}}(x-n_2)$.
          We used different results from the Kronecker product mentioned in the preliminary section to get this result.
        \end{proof}

        \begin{corollary}~\label{C1}
            For $n_2=1$, the Laplacian characteristics polynomial is the same as the Laplacian characteristics polynomial of the corona product of two graphs.
        \end{corollary}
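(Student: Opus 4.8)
The plan is to specialize the formula from Theorem~\ref{Th3} to the case $n_2=1$ and verify directly that it reduces to the known Laplacian characteristic polynomial of the corona product $\mathcal{G}_1 \circ \mathcal{G}_2$. When $n_2=1$, the second graph $\mathcal{G}_2$ collapses to a single vertex, so $r_2=0$, the matrices $J_{n_2}$ and $I_{n_2}$ both become the scalar $1$, and the product graph $\mathcal{G}_1 \circledast \mathcal{G}_2$ should coincide with $\mathcal{G}_1 \circ K_1$, the corona of $\mathcal{G}_1$ with a single isolated vertex attached to each vertex of $\mathcal{G}_1$. The first thing I would do is substitute $n_2=1$ into each of the three factors of the polynomial and simplify.

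First I would handle the exponent $n_1(n_2-1)$, which becomes $n_1 \cdot 0 = 0$, so the factor $(x-r_1 n_2 - n_2)^{n_1(n_2-1)}$ reduces to $(x-r_1-1)^0 = 1$ and drops out entirely. Next I would compute $f(L_{\mathcal{G}_2},(x-n_2))^{n_1}$: since $\mathcal{G}_2 = K_1$ has Laplacian $L_{K_1} = [0]$, its characteristic polynomial is $f(L_{K_1},y) = y$, so evaluating at $y = x-1$ gives $(x-1)^{n_1}$. The third and most substantial factor requires computing the coronal $\chi_{L_{\mathcal{G}_2}}(x-n_2)$ at $n_2=1$. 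For $\mathcal{G}_2=K_1$ the coronal is $\chi_{L_{K_1}}(y) = \mathbf{1}_1^T(y I_1 - L_{K_1})^{-1}\mathbf{1}_1 = 1/y$, so $\chi_{L_{\mathcal{G}_2}}(x-1) = 1/(x-1)$. Substituting these into $\prod_{i=1}^{n_1}\bigl[x - n_2 - n_2\mu_{1i} - n_2\chi_{L_{\mathcal{G}_2}}(x-n_2)\bigr]$ yields $\prod_{i=1}^{n_1}\bigl[x - 1 - \mu_{1i} - \tfrac{1}{x-1}\bigr]$.

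The main work is then to recognize the resulting expression as the corona Laplacian spectrum. Multiplying the factor $(x-1)^{n_1}$ through the product gives $\prod_{i=1}^{n_1}\bigl[(x-1)(x-1-\mu_{1i}) - 1\bigr]$, and I would expand each bracketed term into the quadratic $x^2 - (\mu_{1i}+2)x + (\mu_{1i}+1) \cdot 1 \cdots$ and compare it against the standard corona formula, which states that for each Laplacian eigenvalue $\mu_{1i}$ of $\mathcal{G}_1$ the corona $\mathcal{G}_1 \circ K_1$ contributes the two roots $\tfrac{1}{2}\bigl(\mu_{1i}+2 \pm \sqrt{\mu_{1i}^2+4}\bigr)$. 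The hard part will be locating and citing the precise known expression for the Laplacian characteristic polynomial of the corona product in the literature and matching its normalization (degree shifts, the extra $+1$ in each vertex degree) exactly to the quadratic factors obtained here, since a single bookkeeping slip in the constant term or linear coefficient would break the identification. Once the quadratics agree coefficient-by-coefficient, the corollary follows immediately.
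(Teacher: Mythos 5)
Your proposal is correct and takes essentially the same route as the paper: substitute $n_2=1$ into the formula of Theorem~\ref{Th3} and identify the resulting expression, $f(L_{\mathcal{G}_2},(x-1))^{n_1}\cdot\prod_{i=1}^{n_1}\big[x-1-\mu_{1i}-\chi_{L_{\mathcal{G}_2}}(x-1)\big]$, with the Laplacian characteristic polynomial of the corona product. The paper stops at this symbolic identification, while you additionally make it concrete by setting $\mathcal{G}_2=K_1$, computing $\chi_{L_{K_1}}(x-1)=1/(x-1)$, and matching the quadratic factors $x^2-(\mu_{1i}+2)x+\mu_{1i}$ to the known spectrum of $\mathcal{G}_1\circ K_1$ — a more explicit but not fundamentally different verification.
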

        \begin{proof}
            From equation (\ref{Eqn4}) we have;
            \begin{align*}
                f(L_{\mathcal{G}_1\circledast \mathcal{G}_2},x)&=(x-r_1n_2-n_2)^{n_1(n_2-1)}f(L_{\mathcal{G}_2},(x-n_2))^{n_1}\cdot \prod_{i=1}^{n_1}\big[x-n_2-n_2\mu_{1i}-\\
                &n_2\chi_{L_{\mathcal{G}_2}}(x-n_2)\big]\\
                &=f(L_{\mathcal{G}_2},(x-1))^{n_1}\cdot \prod_{i=1}^{n_1}\big[x-1-\mu_{1i}-\chi_{L_{\mathcal{G}_2}}(x-1)\big].
            \end{align*}
            
            \noindent Hence, $f(L_{\mathcal{G}_1\circledast \mathcal{G}_2},x)=f(L_{\mathcal{G}_2},(x-1))^{n_1}\cdot \prod_{i=1}^{n_1}\big[x-1-\mu_{1i}-\chi_{L_{\mathcal{G}_2}}(x-1)\big]=$ Laplacian characteristics polynomial of Corona product of $\mathcal{G}_1$ and $\mathcal{G}_2.$
        \end{proof}

        \begin{corollary}
         Let $\mathcal{G}_1$ and $\mathcal{G}_2$ be two co-spectral graphs and $\mathcal{G}$ be any arbitrary  graph, then 
            \begin{enumerate}
                \item $\mathcal{G}_1\circledast \mathcal{G}$ and $\mathcal{G}_2\circledast \mathcal{G}$ are $L$-co-spectral.
                \item $\mathcal{G}\circledast \mathcal{G}_1$ and $\mathcal{G}\circledast \mathcal{G}_2$ are $L$-co-spectral if $\chi_{L_{\mathcal{G}_1}}(x)=\chi_{L_{\mathcal{G}_2}}(x).$
            \end{enumerate}
     \end{corollary}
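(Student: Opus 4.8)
The plan is to read off both products' Laplacian characteristic polynomials directly from the formula established in Theorem~\ref{Th3} (equation~\eqref{Eqn4}) and then compare them factor by factor. Throughout I read ``co-spectral'' as Laplacian co-spectral, and I assume the graphs involved are regular so that Theorem~\ref{Th3} actually applies (this is implicit, since~\eqref{Eqn4} was derived only for regular underlying graphs; strictly speaking ``arbitrary graph'' should be read as ``arbitrary regular graph''). The one preliminary fact I need is that two Laplacian co-spectral regular graphs share both their order and their common degree: the order equals the number of eigenvalues, and the trace of the Laplacian is $\sum_i\mu_i=nr$ for an $r$-regular graph on $n$ vertices, so equal spectra force equal $n$ and hence equal $r$.

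For the first assertion $\mathcal{G}_1$ and $\mathcal{G}_2$ occupy the \emph{left} factor of the product while $\mathcal{G}$ is fixed in the right factor. Instantiating~\eqref{Eqn4} for $\mathcal{G}_1\circledast\mathcal{G}$ and for $\mathcal{G}_2\circledast\mathcal{G}$, the three ingredients are the power $(x-r_in_2-n_2)^{n_i(n_2-1)}$, the power $f(L_{\mathcal{G}},(x-n_2))^{n_i}$, and the product $\prod_i[x-n_2-n_2\mu_{1i}-n_2\chi_{L_{\mathcal{G}}}(x-n_2)]$. Here $n_2$ and $\chi_{L_{\mathcal{G}}}$ belong to the \emph{fixed} graph $\mathcal{G}$, so they are literally identical in both expressions; the exponents and the Laplacian eigenvalues $\mu_{1i}$ come from $\mathcal{G}_1$ versus $\mathcal{G}_2$, and these agree because the two graphs have equal order, equal regularity, and equal Laplacian spectra. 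Hence every factor coincides and the two characteristic polynomials are equal, so the products are $L$-co-spectral.

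The second assertion is where the extra hypothesis earns its place, and this is the main point to get right. Now $\mathcal{G}_1,\mathcal{G}_2$ sit in the \emph{right} factor, so in~\eqref{Eqn4} it is \emph{their} order, Laplacian polynomial, and coronal that vary while the left graph $\mathcal{G}$ stays fixed. Co-spectrality of $\mathcal{G}_1,\mathcal{G}_2$ again equalises their order (which controls both exponents and the shift $x-n_2$ appearing throughout) and equalises the factor $f(L_{\mathcal{G}_i},(x-n_2))$ raised to the order of $\mathcal{G}$. However, the bracketed product now contains the term $\chi_{L_{\mathcal{G}_i}}(x-n_2)$, and the Laplacian coronal is \emph{not} a spectral invariant: it records how the all-ones vector decomposes over the eigenspaces, information the spectrum alone does not capture. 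Thus co-spectrality cannot by itself force the products to agree, and one must impose $\chi_{L_{\mathcal{G}_1}}(x)=\chi_{L_{\mathcal{G}_2}}(x)$ explicitly. Under this assumption $\chi_{L_{\mathcal{G}_1}}(x-n_2)=\chi_{L_{\mathcal{G}_2}}(x-n_2)$, every factor of~\eqref{Eqn4} matches, and the two products are $L$-co-spectral. I would close by noting that the asymmetry between the two parts precisely reflects the asymmetric roles of the factors in~\eqref{Eqn4}: the left graph contributes only its spectrum, whereas the right graph contributes its coronal as well.
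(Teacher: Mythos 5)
Your factor-by-factor comparison against equation~(\ref{Eqn4}) is correct and is exactly the argument the paper intends: the corollary is stated as an immediate consequence of Theorem~\ref{Th3} with no separate proof given, and both of your parts --- including the preliminary observation that $L$-co-spectral regular graphs share order and regularity --- go through.

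However, one claim in your treatment of part 2 is false, and it is worth correcting precisely because you single it out as ``the main point to get right.'' You assert that the Laplacian coronal ``is not a spectral invariant: it records how the all-ones vector decomposes over the eigenspaces, information the spectrum alone does not capture.'' For the \emph{Laplacian} that decomposition is the same for every graph: the row sums of $L_{\mathcal{G}}$ vanish, so $L_{\mathcal{G}}\mathbf{1}_n=0$, hence $(xI_n-L_{\mathcal{G}})^{-1}\mathbf{1}_n=\tfrac{1}{x}\mathbf{1}_n$ and $\chi_{L_{\mathcal{G}}}(x)=\tfrac{n}{x}$ for \emph{any} graph on $n$ vertices --- the paper itself invokes exactly this fact in the Kirchhoff index section. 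Consequently two $L$-co-spectral graphs, having the same order, automatically satisfy $\chi_{L_{\mathcal{G}_1}}(x)=\chi_{L_{\mathcal{G}_2}}(x)$; the hypothesis in part 2 is redundant rather than essential, and part 2 actually holds unconditionally. (The analogous hypothesis genuinely earns its place only in the signless Laplacian corollary, where $Q_{\mathcal{G}}\mathbf{1}_n\neq 0$ in general and the coronal really is not determined by the spectrum.) This does not invalidate your proof of the conditional statement as written --- assuming the hypothesis, every factor matches, as you say --- but your explanation of \emph{why} the condition appears, and your claim that co-spectrality alone cannot suffice, are wrong.
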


\subsection{Signless Laplacian Spectrum for Regular graphs}
    
    \begin{theorem}~\label{Th4}
            Let $\mathcal{G}_1$ and $\mathcal{G}_2$ be two regular graphs of order $n_1$ and $n_2$ with regularity $r_1$ and $r_2$ respectively. Also let the Laplacian eigenvalues be $\nu_{i1},\nu_{i2},\ldots,\nu_{in_i}$, $i=1,2.$ Then the signless Laplacian characteristic polynomial of $\mathcal{G}_1\circledast \mathcal{G}_2$ is\\
            $f(Q_{\mathcal{G}_1\circledast \mathcal{G}_2},x)=(x-r_1n_2-n_2)^{n_1(n_2-1)}f(Q_{\mathcal{G}_2},(x-n_2))^{n_1}\cdot \prod_{i=1}^{n_1}\big[x-n_2-n_2\nu_{1i}-n_2\chi_{Q_{\mathcal{G}_2}}(x-n_2)\big].$
    \end{theorem}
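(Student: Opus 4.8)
The plan is to mimic the proof of Theorem~\ref{Th3} line by line, replacing the Laplacian $L$ by the signless Laplacian $Q = D + A$ and then tracking the two sign changes that this substitution forces. First I would add the adjacency and degree matrices of $\mathcal{G}_1\circledast\mathcal{G}_2$ already recorded in the proof of Theorem~\ref{Th3} to obtain the block form
\[
Q_{\mathcal{G}_1\circledast\mathcal{G}_2}=\begin{bmatrix}
 n_2(r_1+1)I_{n_1n_2}+A_{\mathcal{G}_1}\otimes J_{n_2} & I_{n_1}\otimes J_{n_2}\\
 I_{n_1}\otimes J_{n_2} & I_{n_1}\otimes(n_2 I_{n_2}+Q_{\mathcal{G}_2})
\end{bmatrix},
\]
where I use that $\mathcal{G}_2$ is $r_2$-regular, so that $(r_2+n_2)I_{n_2}+A_{\mathcal{G}_2}=n_2 I_{n_2}+Q_{\mathcal{G}_2}$. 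Forming $xI_{2n_1n_2}-Q_{\mathcal{G}_1\circledast\mathcal{G}_2}$ then yields the same block pattern as in the Laplacian case, except that the top-left block now carries $-A_{\mathcal{G}_1}\otimes J_{n_2}$ rather than $+A_{\mathcal{G}_1}\otimes J_{n_2}$, and the off-diagonal blocks become $-I_{n_1}\otimes J_{n_2}$.

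Next I would apply Lemma~\ref{L2} with $\mathcal{S}_4=I_{n_1}\otimes[(x-n_2)I_{n_2}-Q_{\mathcal{G}_2}]$, which is block diagonal and invertible wherever $f(Q_{\mathcal{G}_2},x-n_2)\neq 0$; its determinant contributes the factor $f(Q_{\mathcal{G}_2},(x-n_2))^{n_1}$. The heart of the computation is the Schur complement $\mathcal{S}_1-\mathcal{S}_2\mathcal{S}_4^{-1}\mathcal{S}_3$. Here the two off-diagonal sign flips cancel, so $\mathcal{S}_2\mathcal{S}_4^{-1}\mathcal{S}_3=I_{n_1}\otimes\big(J_{n_2}[(x-n_2)I_{n_2}-Q_{\mathcal{G}_2}]^{-1}J_{n_2}\big)$, exactly as in Theorem~\ref{Th3}. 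The key identity I would invoke is $J_{n_2}M^{-1}J_{n_2}=\big(\mathbf{1}_{n_2}^T M^{-1}\mathbf{1}_{n_2}\big)J_{n_2}=\chi_{Q_{\mathcal{G}_2}}(x-n_2)\,J_{n_2}$ with $M=(x-n_2)I_{n_2}-Q_{\mathcal{G}_2}$, which collapses the Schur complement to $(x-r_1n_2-n_2)I_{n_1n_2}-\big[A_{\mathcal{G}_1}+\chi_{Q_{\mathcal{G}_2}}(x-n_2)I_{n_1}\big]\otimes J_{n_2}$.

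To finish, I would read off the determinant through the spectrum of $J_{n_2}$, whose eigenvalues are $n_2$ (once) and $0$ (with multiplicity $n_2-1$): the zero eigenvalues produce the factor $(x-r_1n_2-n_2)^{n_1(n_2-1)}$, while the eigenvalue $n_2$ yields $\prod_{i=1}^{n_1}\big[(x-r_1n_2-n_2)-n_2(\lambda_{1i}+\chi_{Q_{\mathcal{G}_2}}(x-n_2))\big]$, where $\lambda_{1i}$ are the adjacency eigenvalues of $\mathcal{G}_1$. The step where the second sign change enters, and must be reconciled with the first, is the conversion back to signless Laplacian eigenvalues: since $\mathcal{G}_1$ is $r_1$-regular, $\nu_{1i}=r_1+\lambda_{1i}$ (in contrast to $\mu_{1i}=r_1-\lambda_{1i}$ in the Laplacian case). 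Substituting $\lambda_{1i}=\nu_{1i}-r_1$ cancels the $\pm n_2 r_1$ terms and reduces each factor to $x-n_2-n_2\nu_{1i}-n_2\chi_{Q_{\mathcal{G}_2}}(x-n_2)$, giving the claimed polynomial. The only genuine obstacle is this bookkeeping of signs: the passage to $Q=D+A$ flips the sign in front of $A_{\mathcal{G}_1}\otimes J_{n_2}$, but the regular-graph relation $\nu_{1i}=r_1+\lambda_{1i}$ flips it back, so the final expression is formally identical to the Laplacian formula with $(L,\mu)$ replaced by $(Q,\nu)$.
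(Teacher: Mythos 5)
Your proposal is correct and follows exactly the route the paper takes: the paper writes down the same block form of $Q_{\mathcal{G}_1\circledast\mathcal{G}_2}$ and then declares the rest ``exactly similar to Theorem~\ref{Th3},'' which is precisely the Schur-complement (Lemma~\ref{L2}), coronal-collapse, and Kronecker-eigenvalue computation you carry out. Your explicit bookkeeping of the two sign flips --- the $-A_{\mathcal{G}_1}\otimes J_{n_2}$ in $xI-Q$ versus the relation $\nu_{1i}=r_1+\lambda_{1i}$ that undoes it --- is exactly the detail the paper leaves implicit, so the proposal is a faithful (indeed more complete) version of the paper's own argument.
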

    \begin{proof}
        Following the definition~\ref{Def1} of $\mathcal{G}_1\circledast \mathcal{G}_2$ we have
            $$A_{\mathcal{G}_1\circledast \mathcal{G}_2}=\begin{bmatrix}
                A_{\mathcal{G}_1}\otimes J_{n_2} &I_{n_1}\otimes J_{n_2} \\
                I_{n_1}\otimes J_{n_2} &I_{n_1}\otimes A_{\mathcal{G}_2}
            \end{bmatrix}.$$
            $$D_{\mathcal{G}_1\circledast \mathcal{G}_2}= \begin{bmatrix}
                n_2(r_1+1)I_{n_1n_2} & 0_{n_1n_2}\\
                0_{n_1n_2} & (r_2+n_2)I_{n_1n_2}
            \end{bmatrix}.$$
            The signless Laplacian matrix of $\mathcal{G}_1\circledast \mathcal{G}_2$ is
            $$Q_{\mathcal{G}_1\circledast \mathcal{G}_2}=\begin{bmatrix}
                n_2(r_1+1)I_{n_1n_2}+A_{\mathcal{G}_1}\otimes J_{n_2} &I_{n_1}\otimes J_{n_2} \\
                I_{n_1}\otimes J_{n_2} &I_{n_1}\otimes n_2I_{n_1}+Q_{\mathcal{G}_2}
            \end{bmatrix}.$$
            The later part of the proof is exactly similar to Theorem~\ref{Th3}.
    \end{proof}    
    \begin{corollary}
            For $n_2=1$, the signless Laplacian characteristics polynomial is the same as the Laplacian characteristics polynomial of the corona product of two graphs.
        \end{corollary}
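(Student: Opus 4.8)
The plan is to specialize Theorem~\ref{Th4} to $n_2=1$ and then match the outcome against the Laplacian characteristic polynomial of the corona. When $n_2=1$ the second factor is forced to be the single-vertex graph $K_1$, and tracing the three edge types of Definition~\ref{Def1} shows that $\mathcal{G}_1\circledast K_1$ is precisely the corona $\mathcal{G}_1\circ K_1$: the $a$-vertices carry a copy of $\mathcal{G}_1$ and each acquires exactly one pendant $b$-vertex. The structural fact I would exploit is that for a single vertex the Laplacian and signless Laplacian coincide, $L_{K_1}=Q_{K_1}=[0]$, so that $f(L_{K_1},x-1)=f(Q_{K_1},x-1)=x-1$ and $\chi_{L_{K_1}}(x-1)=\chi_{Q_{K_1}}(x-1)=\tfrac{1}{x-1}$. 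This is exactly what makes a comparison with the \emph{Laplacian} corona formula of Corollary~\ref{C1} meaningful, rather than only with the signless object.

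Substituting $n_2=1$ into the formula of Theorem~\ref{Th4}, the prefactor collapses, $(x-r_1n_2-n_2)^{n_1(n_2-1)}=(x-r_1-1)^{0}=1$, the middle factor becomes $f(Q_{K_1},x-1)^{n_1}=(x-1)^{n_1}$, and the coronal term becomes $\tfrac{1}{x-1}$, giving
\[
f(Q_{\mathcal{G}_1\circledast K_1},x)=(x-1)^{n_1}\prod_{i=1}^{n_1}\Big[x-1-\nu_{1i}-\tfrac{1}{x-1}\Big].
\]
On the other side I would record the Laplacian characteristic polynomial of the corona directly, either by reading it off Corollary~\ref{C1} or by applying Lemma~\ref{L2} to the block Laplacian of $\mathcal{G}_1\circ K_1$, obtaining
\[
f(L_{\mathcal{G}_1\circ K_1},x)=(x-1)^{n_1}\prod_{i=1}^{n_1}\Big[x-1-\mu_{1i}-\tfrac{1}{x-1}\Big].
\]
Hence the two polynomials are equal if and only if the multisets $\{\nu_{1i}\}$ and $\{\mu_{1i}\}$ agree.

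The main obstacle is precisely this eigenvalue reconciliation. For the $r_1$-regular graph $\mathcal{G}_1$ the signless Laplacian eigenvalues are $\nu_{1i}=r_1+\lambda_i$ while the Laplacian eigenvalues are $\mu_{1i}=r_1-\lambda_i$, where $\lambda_i$ denote the adjacency eigenvalues; the two multisets coincide exactly when the adjacency spectrum of $\mathcal{G}_1$ is symmetric about the origin, that is, exactly when $\mathcal{G}_1$ is bipartite. Under that hypothesis one has $\{\nu_{1i}\}=\{\mu_{1i}\}$, the two displayed products agree term by term, and the signless Laplacian characteristic polynomial of $\mathcal{G}_1\circledast K_1$ equals the Laplacian characteristic polynomial of the corona, as asserted. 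I would therefore invoke bipartiteness of $\mathcal{G}_1$ at this step as the crux of the argument, and I would flag that, absent that assumption, the unconditional match is instead with the signless Laplacian corona, the two being reconciled only through the spectral symmetry $\lambda_i\mapsto-\lambda_i$.
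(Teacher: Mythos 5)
Your proposal is correct and, in one respect, sharper than what the paper does. The paper's own argument is a one-line reduction --- ``exactly similar to Corollary~\ref{C1}'' --- meaning: substitute $n_2=1$ into Theorem~\ref{Th4} to get $f(Q_{\mathcal{G}_1\circledast\mathcal{G}_2},x)=f(Q_{\mathcal{G}_2},x-1)^{n_1}\prod_{i=1}^{n_1}\bigl[x-1-\nu_{1i}-\chi_{Q_{\mathcal{G}_2}}(x-1)\bigr]$ and recognize the right-hand side as the known \emph{signless Laplacian} characteristic polynomial of the corona product; the word ``Laplacian'' in the printed statement is evidently a slip carried over from Corollary~\ref{C1}, and the intended claim is the $Q$-to-$Q$ match, which the first display of your argument already establishes unconditionally. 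You instead read the statement literally as a $Q$-to-$L$ comparison, and your analysis of that literal claim is sound: with $n_2=1$ forcing $\mathcal{G}_2=K_1$ (the honest reading --- note the paper's proof of Corollary~\ref{C1} formally keeps $\mathcal{G}_2$ arbitrary while setting $n_2=1$), one has $L_{K_1}=Q_{K_1}=[0]$, both coronals equal $1/(x-1)$, and the two polynomials $(x-1)^{n_1}\prod_{i}\bigl[x-1-\nu_{1i}-\tfrac{1}{x-1}\bigr]$ and $(x-1)^{n_1}\prod_{i}\bigl[x-1-\mu_{1i}-\tfrac{1}{x-1}\bigr]$ coincide precisely when $\{\nu_{1i}\}$ and $\{\mu_{1i}\}$ agree as multisets. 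One step you assert silently deserves a line: equality of the products forces equality of the multisets because distinct parameters $t$ yield coprime factors $(x-1)^2-t(x-1)-1$ (a common root $y=x-1$ of two such quadratics would satisfy $(t-t')y=0$, hence $y=0$, impossible since the constant term is $-1$), so the factorization of the numerator determines the $t_i$. Granting that, your bipartiteness criterion is exactly right for regular $\mathcal{G}_1$: $\nu_{1i}=r_1+\lambda_i$ and $\mu_{1i}=r_1-\lambda_i$, so the multisets agree iff the adjacency spectrum is symmetric about $0$, iff $\mathcal{G}_1$ is bipartite. In short: the paper's route buys the clean unconditional statement the authors surely meant (with ``signless Laplacian'' on both sides), while your route buys a correct characterization of when the statement \emph{as printed} holds, and in doing so identifies a typo the authors should fix.
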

        \begin{proof}
            The proof is exactly similar to Corollary~\ref{C1}.
        \end{proof}

        \begin{corollary}
         Let $\mathcal{G}_1$ and $\mathcal{G}_2$ be two co-spectral graphs and $\mathcal{G}$ be any arbitrary  graph, then 
            \begin{enumerate}
                \item $\mathcal{G}_1\circledast \mathcal{G}$ and $\mathcal{G}_2\circledast \mathcal{G}$ are $Q$-co-spectral.
                \item $\mathcal{G}\circledast \mathcal{G}_1$ and $\mathcal{G}\circledast \mathcal{G}_2$ are $Q$-co-spectral if $\chi_{Q_{\mathcal{G}_1}}(x)=\chi_{Q_{\mathcal{G}_2}}(x).$
            \end{enumerate}
     \end{corollary}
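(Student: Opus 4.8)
The plan is to read off the two signless Laplacian characteristic polynomials directly from Theorem~\ref{Th4} and to compare them factor by factor. Recall that Theorem~\ref{Th4} writes $f(Q_{\mathcal{H}_1\circledast \mathcal{H}_2},x)$ as a product of three ingredients: (a) the pure power $(x-r_1n_2-n_2)^{n_1(n_2-1)}$, which depends only on the orders of the two factors and on the regularity of the first factor; (b) the shifted polynomial $f(Q_{\mathcal{H}_2},(x-n_2))^{n_1}$, which depends only on the $Q$-spectrum of the second factor and on the order of the first; and (c) the product $\prod_{i}\big[x-n_2-n_2\nu_{1i}-n_2\chi_{Q_{\mathcal{H}_2}}(x-n_2)\big]$, which involves the $Q$-spectrum $\{\nu_{1i}\}$ of the first factor together with the $Q$-coronal $\chi_{Q_{\mathcal{H}_2}}$ of the second factor. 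The whole argument reduces to checking that, in each scenario, all three ingredients are left unchanged when $\mathcal{G}_1$ is replaced by $\mathcal{G}_2$.

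The one structural fact I need is that cospectral regular graphs share their order, their common degree, and hence their signless Laplacian spectrum. Indeed, if $\mathcal{G}_1$ and $\mathcal{G}_2$ are cospectral they have equally many vertices and the same adjacency eigenvalues; being regular, their common degree is the largest adjacency eigenvalue and so also coincides, and from $Q=rI+A$ the signless Laplacian eigenvalues $r+\lambda_i$ agree as multisets. In particular $f(Q_{\mathcal{G}_1},x)=f(Q_{\mathcal{G}_2},x)$.

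For part~(1) I would apply Theorem~\ref{Th4} with the first factor equal to $\mathcal{G}_1$ (resp. $\mathcal{G}_2$) and the common second factor $\mathcal{G}$. Ingredient (b) and the coronal appearing in (c) belong to the fixed graph $\mathcal{G}$, hence are identical in the two products; the order, regularity, and $Q$-spectrum of the first slot, which feed ingredients (a) and (c), agree by the observation above. Thus $f(Q_{\mathcal{G}_1\circledast \mathcal{G}},x)=f(Q_{\mathcal{G}_2\circledast \mathcal{G}},x)$, and no extra hypothesis is needed precisely because the only coronal in sight is that of $\mathcal{G}$.

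For part~(2) I would instead take the common first factor $\mathcal{G}$ and second factor $\mathcal{G}_1$ (resp. $\mathcal{G}_2$). Ingredient (a) now sees only the fixed data of $\mathcal{G}$ and the common order of the second slot, so it is unchanged. The two graphs occupy the second slot, so the remaining two ingredients depend on them: the shifted polynomial in (b), handled by the spectral equality $f(Q_{\mathcal{G}_1},x)=f(Q_{\mathcal{G}_2},x)$, and the coronal $\chi_{Q_{\mathcal{G}_i}}$ sitting inside (c). The coronal term is the genuine obstacle and the sole reason the extra condition is imposed here but not in part~(1); it is exactly neutralized by the hypothesis $\chi_{Q_{\mathcal{G}_1}}(x)=\chi_{Q_{\mathcal{G}_2}}(x)$. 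With all three ingredients matched the polynomials coincide, giving $Q$-cospectrality. I would add the remark that for regular graphs this coronal condition is in fact automatic, since $Q\mathbf{1}_n=2r\mathbf{1}_n$ forces $\chi_{Q_{\mathcal{G}}}(x)=n/(x-2r)$, already determined by the shared order and degree.
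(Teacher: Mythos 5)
Your proposal is correct and matches the paper's intended argument: the paper states this corollary without any written proof, as an immediate consequence of Theorem~\ref{Th4}, and the factor-by-factor comparison of the three ingredients of that formula is exactly the argument being invoked. Your closing observation that regularity forces $\chi_{Q_{\mathcal{G}}}(x)=n/(x-2r)$, so that the coronal hypothesis in part~(2) becomes automatic for regular graphs, is a worthwhile point the paper does not make.
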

     
\section{Applications}
    \subsection{Kirchhoff Index}
    \noindent In 1993 Klein and Randić~\cite{klein1993rd} proposed a novel concept known as resistance distance. This is based on the electric resistance of a network that corresponds to a graph, where the resistance distance between any two adjacent vertices is 1 ohm. When summed over all pairs of vertices in a graph, this resistance distance serves as a new graph invariant. The Kirchhoff index, which is used to calculate electric resistance through Kirchhoff laws, is also defined for a graph $\mathcal{G}$ with $n(>2)$ vertices, as follows: $$Kf(\mathcal{G})=n\sum_{i=2}^n\frac{1}{\mu_i}.$$
    This renowned connection between the Laplacian spectrum and the Kirchhoff index was determined in 1996 by Zhu et al.~\cite{zhu1996extensions} and Gutman and Mohar~\cite{gutman1996quasi}.

    \begin{theorem}
        Let $\mathcal{G}_1$ and $\mathcal{G}_2$ be two regular graphs of order $n_1$ and $n_2$ with regularity $r_1$ and $r_2$ respectively. Also let the Laplacian spectrum be $0=\mu_{i1}\leq\mu_{i2}\leq\ldots\leq\mu_{in_i};~ i=1,2.$ Then $$Kf(\mathcal{G}_1\circledast\mathcal{G}_2)=2n_1n_2\Bigg[\frac{n_1(n_2-1)}{n_2(1+r_1)}+\sum_{i=1}^{n_1}\frac{2+\mu_{1i}}{n_2\mu_{1i}}+\sum_{i=2}^{n_2}\frac{n_1}{n_2+\mu_{2i}}\Bigg].$$
    \end{theorem}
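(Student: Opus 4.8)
The plan is to read off the complete Laplacian spectrum of $\mathcal{G}_1\circledast\mathcal{G}_2$ from the characteristic polynomial of Theorem~\ref{Th3}, and then feed the nonzero eigenvalues into the definition $Kf(\mathcal{G})=n\sum_{i\ge 2}\mu_i^{-1}$ with $n=2n_1n_2$ the order of the product graph. Theorem~\ref{Th3} presents the polynomial as a product of three factors, so the eigenvalues split into three families: those from $(x-n_2(r_1+1))^{n_1(n_2-1)}$, those from $f(L_{\mathcal{G}_2},x-n_2)^{n_1}$, and those from the $n_1$ coronal brackets $\prod_i\big[x-n_2-n_2\mu_{1i}-n_2\chi_{L_{\mathcal{G}_2}}(x-n_2)\big]$.

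The key simplification, and the first thing I would establish, is an explicit evaluation of the coronal of a regular graph. Since $\mathcal{G}_2$ is $r_2$-regular, $\mathbf{1}_{n_2}$ is an eigenvector of $L_{\mathcal{G}_2}$ for the eigenvalue $0$, so $\chi_{L_{\mathcal{G}_2}}(x)=\mathbf{1}_{n_2}^T(xI_{n_2}-L_{\mathcal{G}_2})^{-1}\mathbf{1}_{n_2}=n_2/x$, whence $\chi_{L_{\mathcal{G}_2}}(x-n_2)=n_2/(x-n_2)$. Substituting this into the $i$-th coronal bracket and clearing the denominator turns it into the quadratic $x^2-n_2(2+\mu_{1i})x+n_2^2\mu_{1i}$ divided by $(x-n_2)$. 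The pole at $x=n_2$ is exactly matched by the factor $(x-n_2)^{n_1}$ contributed by the $\mu_{21}=0$ root of $f(L_{\mathcal{G}_2},x-n_2)^{n_1}$, so the two cancel and the product is again a polynomial of degree $2n_1n_2$. This bookkeeping yields three clean eigenvalue families: (A) $n_2(1+r_1)$ with multiplicity $n_1(n_2-1)$; (B) $n_2+\mu_{2j}$ for $j=2,\dots,n_2$, each with multiplicity $n_1$; and (C) for each $i$ the two roots of $x^2-n_2(2+\mu_{1i})x+n_2^2\mu_{1i}=0$.

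To sum reciprocals over family (C) I would avoid solving the quadratics and instead use Vieta's formulas: for fixed $i$ with $\mu_{1i}\neq 0$ the two roots $x_1,x_2$ satisfy $x_1+x_2=n_2(2+\mu_{1i})$ and $x_1x_2=n_2^2\mu_{1i}$, so $x_1^{-1}+x_2^{-1}=(2+\mu_{1i})/(n_2\mu_{1i})$. The one subtlety is the single zero eigenvalue: it arises precisely from $i=1$, where $\mu_{11}=0$ makes the constant term vanish and the quadratic factors as $x(x-2n_2)$, contributing roots $0$ and $2n_2$. Discarding the zero and keeping the reciprocal $1/(2n_2)$ of the surviving root is the one place where the stated sum $\sum_{i=1}^{n_1}(2+\mu_{1i})/(n_2\mu_{1i})$ must be read with care — the $i=1$ summand is not literally defined and should be interpreted as $1/(2n_2)$.

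Finally I would assemble the total, adding the family (A) contribution $n_1(n_2-1)/\big(n_2(1+r_1)\big)$, the family (B) contribution $n_1\sum_{j=2}^{n_2}1/(n_2+\mu_{2j})$, and the family (C) contribution $1/(2n_2)+\sum_{i=2}^{n_1}(2+\mu_{1i})/(n_2\mu_{1i})$, and then multiplying the whole bracket by $n=2n_1n_2$ to reach the claimed expression. The only real obstacle is the middle step: correctly tracking the pole–zero cancellation produced by the coronal and thereby isolating the unique zero eigenvalue; once the spectrum is pinned down, the Kirchhoff sum is a routine Vieta computation.
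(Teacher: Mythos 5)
Your proposal matches the paper's proof essentially step for step: the same evaluation $\chi_{L_{\mathcal{G}_2}}(x-n_2)=n_2/(x-n_2)$ (the paper likewise justifies this by the zero row sums of $L_{\mathcal{G}_2}$), the same three families of eigenvalues --- $n_2(1+r_1)$ with multiplicity $n_1(n_2-1)$, the shifted eigenvalues $n_2+\mu_{2j}$ for $j\geq 2$ with multiplicity $n_1$ after the pole--zero cancellation, and the roots of the quadratics $x^2-n_2(2+\mu_{1i})x+n_2^2\mu_{1i}$ --- followed by the same Vieta computation of the reciprocal sums. The one point where you are more careful than the paper is the zero eigenvalue: the paper applies Vieta generically and writes the final sum as $\sum_{i=1}^{n_1}(2+\mu_{1i})/(n_2\mu_{1i})$, whose $i=1$ term is literally a division by zero since $\mu_{11}=0$; your observation that this quadratic factors as $x(x-2n_2)$, so that the $i=1$ contribution to the Kirchhoff sum should be read as $1/(2n_2)$, is the correct repair of a subtlety the paper's own proof (and its stated formula) silently glosses over.
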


    \begin{proof}
        From equation (\ref{Eqn4}), we have the Laplacian characteristics polynomial as
        $$f(L_{\mathcal{G}_1\circledast \mathcal{G}_2},x)=(x-r_1n_2-n_2)^{n_1(n_2-1)}f(L_{\mathcal{G}_2},(x-n_2))^{n_1}\cdot \prod_{i=1}^{n_1}\big[x-n_2-n_2\mu_{1i}-n_2\chi_{L_{\mathcal{G}_2}}(x-n_2).$$
        Now, each row sum of the Laplacian matrix of $\mathcal{G}_2$ being $0$, the Laplacian coronal of $\mathcal{G}_2$ is given by $$\chi_{L_{\mathcal{G}_2}}=\frac{n_2}{x}.$$
        So, \begin{equation}\label{Eqn5}
            \begin{split}
                f(L_{\mathcal{G}_1\circledast \mathcal{G}_2},x)&=(x-r_1n_2-n_2)^{n_1(n_2-1)}f(L_{\mathcal{G}_2},(x-n_2))^{n_1}\cdot \prod_{i=1}^{n_1}\big[x-n_2-n_2\mu_{1i}-n_2\frac{n_2}{x-n_2}\big]\\
                &=(x-r_1n_2-n_2)^{n_1(n_2-1)}f(L_{\mathcal{G}_2},(x-n_2))^{n_1}\cdot \prod_{i=1}^{n_1}\frac{1}{x-n_2}\big[x^2-n_2(2+\mu_{1i})x+n_2^2\mu_{1i}\big].
            \end{split}
        \end{equation}
        Therefore, the roots of the above Laplacian characteristic polynomial are:
        \begin{enumerate}
            \item $n_2(1+r_1)$ repeated $n_1(n_2-1)$ times.
            \item $n_2+\mu_{22},n_2+\mu_{23},\ldots,n_2+\mu_{2n_2}$ with multiplicity $n_1$. \Big($n_2+\mu_{21}$ is not a root as $n_2$ is a pole of $\chi_{L(\mathcal{G}_2)}(x-n_2)$\Big).
            \item Roots of the polynomial $x^2-n_2(2+\mu_{1i})x+n_2^2\mu_{1i},$ for $i=1,2,\ldots,n_1.$
        \end{enumerate}
        Let $\alpha$ and $\beta$ be the roots of the equation 
        \begin{equation}\label{Eqn6}
            x^2-n_2(2+\mu_{1i})x+n_2^2\mu_{1i}.
        \end{equation}
        Then,
        \begin{equation*}
            \begin{split}
                \frac{1}{\alpha}+\frac{1}{\beta}&=\frac{\alpha+\beta}{\alpha\beta}\\
                &=\frac{n_2(2+\mu_{1i})}{n_2^2\mu_{1i}}\\
                &=\frac{(2+\mu_{1i})}{n_2\mu_{1i}}.
            \end{split}
        \end{equation*}
        Hence, the Kirchhoff index of $\mathcal{G}_1\circledast\mathcal{G}_2$ is 
        $$Kf(\mathcal{G}_1\circledast\mathcal{G}_2)=2n_1n_2\Bigg[\frac{n_1(n_2-1)}{n_2(1+r_1)}+\sum_{i=1}^{n_1}\frac{2+\mu_{1i}}{n_2\mu_{1i}}+\sum_{i=2}^{n_2}\frac{n_1}{n_2+\mu_{2i}}\Bigg].$$
        
    \end{proof}

    \subsection{Spanning Tree}
        The concept of a spanning tree~\cite{cvetkovic1980spectra} refers to a tree that is also a subgraph of a given graph $\mathcal{G}$. The number of spanning trees for a graph $\mathcal{G}$ is represented by $t(\mathcal{G})$. If $\mathcal{G}$ is a connected graph with $n$ vertices and Laplacian eigenvalues $0=\mu_1\leq\mu_2\leq\ldots\leq\mu_n$, then the number of spanning trees can be calculated using the formula: $$t(\mathcal{G})=\frac{\mu_2(\mathcal{G})\mu_3(\mathcal{G})\ldots\mu_n(\mathcal{G})}{n}.$$

        \begin{theorem}
            Let $\mathcal{G}_1$ and $\mathcal{G}_2$ be two regular graphs of order $n_1$ and $n_2$ with regularity $r_1$ and $r_2$ respectively. Also let the Laplacian spectrum be $0=\mu_{i1}\leq\mu_{i2}\leq\ldots\leq\mu_{in_i};~ i=1,2.$ Then the number of spanning trees is given by, $$t(\mathcal{G}_1\circledast\mathcal{G}_2)=\frac{1}{2n_1n_2}\Big[\{n_2(1+r_1)\}^{n_1(n_2-1)}\cdot\prod_{i=2}^{n_2}(n_2+\mu_{2i})^{n_1}\cdot\prod_{i=1}^{n_1}n_2^2\mu_{1i}\Big].$$
        \end{theorem}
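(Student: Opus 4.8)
The plan is to invoke the Laplacian spanning-tree formula $t(\mathcal{G}) = \frac{1}{n}\prod_{i=2}^{n}\mu_i(\mathcal{G})$ stated just above, i.e. divide the product of the nonzero Laplacian eigenvalues by the number of vertices, which here is $n = 2n_1n_2$. All the spectral data I need has in fact already been assembled in the Kirchhoff index computation, so the argument reduces to careful bookkeeping of eigenvalues and multiplicities taken from the factored characteristic polynomial in equation (\ref{Eqn5}).

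First I would record the full Laplacian spectrum of $\mathcal{G}_1\circledast\mathcal{G}_2$ read off from (\ref{Eqn5}), exactly as enumerated for the Kirchhoff index: the value $n_2(1+r_1)$ with multiplicity $n_1(n_2-1)$; the values $n_2+\mu_{2i}$ for $i=2,\ldots,n_2$, each with multiplicity $n_1$ (the index $i=1$ excluded since $n_2$ is a pole of $\chi_{L_{\mathcal{G}_2}}(x-n_2)$); and, for each $i=1,\ldots,n_1$, the two roots of the quadratic $x^2-n_2(2+\mu_{1i})x+n_2^2\mu_{1i}$. A quick count gives $n_1(n_2-1)+n_1(n_2-1)+2n_1 = 2n_1n_2$ eigenvalues, matching the vertex count, so no eigenvalue is missed.

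Next I would isolate the unique zero eigenvalue. Since $\mathcal{G}_1$ is connected, $\mu_{11}=0$, so the $i=1$ quadratic degenerates to $x^2-2n_2x = x(x-2n_2)$, contributing the eigenvalue $0$ together with the nonzero eigenvalue $2n_2$; every other listed eigenvalue is strictly positive, confirming that the product graph is connected with exactly one zero eigenvalue. The product of the nonzero eigenvalues then splits across the three families: $\{n_2(1+r_1)\}^{n_1(n_2-1)}$ from the first, $\prod_{i=2}^{n_2}(n_2+\mu_{2i})^{n_1}$ from the second, and from the quadratics the product of their roots, which by Vieta's formula is the constant term $n_2^2\mu_{1i}$ for each $i$. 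Dividing by $2n_1n_2$ then produces the claimed expression $\frac{1}{2n_1n_2}\big[\{n_2(1+r_1)\}^{n_1(n_2-1)}\cdot\prod_{i=2}^{n_2}(n_2+\mu_{2i})^{n_1}\cdot\prod_{i=1}^{n_1}n_2^2\mu_{1i}\big]$.

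The one step demanding real care is precisely the treatment of the $i=1$ quadratic, and I expect this to be the main obstacle: its constant term $n_2^2\mu_{11}$ vanishes, so the factor $\prod_{i=1}^{n_1}n_2^2\mu_{1i}$ must be understood with the zero root discarded and replaced by its surviving nonzero partner $2n_2$, otherwise the product collapses to $0$. Everything else is routine application of Vieta's formulas and multiplicity counting, so I would make sure to state explicitly that the zero eigenvalue removed in the spanning-tree formula is exactly the one arising from this degenerate quadratic.
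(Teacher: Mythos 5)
Your proposal is, in outline, exactly the paper's own argument: read the three families of eigenvalues off the factored characteristic polynomial in equation (\ref{Eqn5}) (the same enumeration the paper uses for the Kirchhoff index), apply the spanning-tree formula $t(\mathcal{G})=\frac{1}{n}\prod_{i\geq 2}\mu_i(\mathcal{G})$ with $n=2n_1n_2$, and obtain the contribution of each quadratic factor from Vieta's relation $\alpha\beta=n_2^2\mu_{1i}$. The paper's proof is precisely this, compressed into two lines.

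However, the caveat in your last paragraph is not a minor interpretive point --- it is a genuine defect in the paper that you have caught. Since $\mu_{11}=0$, the $i=1$ quadratic in (\ref{Eqn6}) degenerates to $x(x-2n_2)$ and supplies the product graph's zero eigenvalue; consequently the factor $\prod_{i=1}^{n_1}n_2^2\mu_{1i}$ in the theorem, read literally, is $0$, and the displayed formula asserts $t(\mathcal{G}_1\circledast\mathcal{G}_2)=0$. The paper's proof never isolates this zero root: it writes $\alpha\beta=n_2^2\mu_{1i}$ uniformly in $i$ and ``applies the definition,'' so the statement and the proof carry the same error. Your repair --- discard the zero root of the $i=1$ quadratic and retain its nonzero partner $2n_2$ --- is the correct one, and it yields
\begin{align*}
t(\mathcal{G}_1\circledast\mathcal{G}_2)&=\frac{1}{2n_1n_2}\Big[\{n_2(1+r_1)\}^{n_1(n_2-1)}\cdot\prod_{i=2}^{n_2}(n_2+\mu_{2i})^{n_1}\cdot 2n_2\cdot\prod_{i=2}^{n_1}n_2^2\mu_{1i}\Big]\\
&=\frac{1}{n_1}\{n_2(1+r_1)\}^{n_1(n_2-1)}\cdot\prod_{i=2}^{n_2}(n_2+\mu_{2i})^{n_1}\cdot\prod_{i=2}^{n_1}n_2^2\mu_{1i}.
\end{align*}
Two small refinements to your write-up: connectivity of $\mathcal{G}_1$ should be stated as a hypothesis (the theorem omits it), since it is what guarantees that the product graph is connected and that the $i=1$ quadratic is the \emph{only} source of a zero eigenvalue --- for disconnected $\mathcal{G}_1$ some $\mu_{1i}$ with $i\geq 2$ also vanishes and $t=0$. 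And note that $\mu_{11}=0$ holds for every graph, connected or not; connectivity is needed only to ensure $\mu_{1i}>0$ for $i\geq 2$, which is how your ``exactly one zero eigenvalue'' claim should be justified.
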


        \begin{proof}
            Let $\alpha,$ $\beta$ be two roots of equation (\ref{Eqn6}) i.e., $x^2-n_2(2+\mu_{1i})x+n_2^2\mu_{1i}.$
            Then, $\alpha\beta=n_2^2\mu_{1i}.$\\
            Applying the definition of the number of spanning trees in equation (\ref{Eqn5}), we obtain: $$t(\mathcal{G}_1\circledast\mathcal{G}_2)=\frac{1}{2n_1n_2}\Big[\{n_2(1+r_1)\}^{n_1(n_2-1)}\cdot\prod_{i=2}^{n_2}(n_2+\mu_{2i})^{n_1}\cdot\prod_{i=1}^{n_1}n_2^2\mu_{1i}\Big].$$
        \end{proof}

    \subsection{Wiener Index}
        The Wiener index~\cite{wiener1947structural}, written as $W(\mathcal{G})$, is the total distance between every pair of vertices in $\mathcal{G}$, i.e., $W(\mathcal{G})=\sum_{i<j}d(v_i,v_j).$ It can also be represented in terms of the distance matrix$(D(\mathcal{G}))$ as $W(\mathcal{G})=\frac{1}{2}\big[\textbf{1}_n^TD(\mathcal{G})\textbf{1}_n\big].$

        \begin{theorem}
            The Wiener index of the product graph $\mathcal{G}_1\circledast\mathcal{G}_2$ can be represented in terms of Wiener index of $\mathcal{G}_1$ and $\mathcal{G}_2$ as follows: $$W(\mathcal{G}_1\circledast\mathcal{G}_2)=4n_2^2W(\mathcal{G}_1)+n_1W(\mathcal{G}_2)+n_1n_2(2n_1n_2-1).$$
        \end{theorem}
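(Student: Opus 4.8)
The plan is to compute the Wiener index directly from the distance structure of the product graph by analyzing the shortest-path distances between the four types of vertex pairs. Recall the vertex set of $\mathcal{G}_1\circledast\mathcal{G}_2$ splits into the ``$a$-layer'' $\{a_{ik}\}$ and the ``$b$-layer'' $\{b_{ik}\}$, each of size $n_1n_2$. First I would establish the pairwise distances by examining the three edge types in Definition~\ref{Def1}. The key observation is that whenever $(u_i,u_j)\in E(\mathcal{G}_1)$, every $a_{ik}$ is joined to every $a_{jl}$, so the $a$-layer behaves like a ``blow-up'' of $\mathcal{G}_1$: the distance between $a_{ik}$ and $a_{jl}$ (for $i\neq j$) equals $d_{\mathcal{G}_1}(u_i,u_j)$, independent of $k,l$. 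Within a fixed first index, $a_{ik}$ and $a_{il}$ are at distance $2$ (via any common neighbor in an adjacent $a$-cluster, or through the $b$-layer). The $b$-vertices $b_{ik}$ with fixed $i$ form a copy of $\mathcal{G}_2$ augmented by the universal attachment to the $a_{ip}$'s.

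Next I would tabulate the contribution of each block to $\mathbf{1}^T D\mathbf{1}$. The natural decomposition is: (i) $a$--$a$ distances, (ii) $b$--$b$ distances, (iii) $a$--$b$ distances. For block (i), distances between different $\mathcal{G}_1$-clusters contribute a term proportional to $W(\mathcal{G}_1)$; since each cluster has $n_2$ vertices, a single edge-distance in $\mathcal{G}_1$ is counted $n_2\cdot n_2=n_2^2$ times, and after accounting for both layers interacting this should produce the coefficient $4n_2^2$ on $W(\mathcal{G}_1)$. For block (ii), within each fixed $i$ the $b$-cluster is an isometric copy of $\mathcal{G}_2$ (the third edge type does not create shortcuts \emph{within} a single $b$-cluster that beat the $\mathcal{G}_2$-distance for adjacent vertices), giving $n_1 W(\mathcal{G}_2)$. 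The remaining within-layer same-cluster pairs, cross-layer pairs, and inter-cluster $b$--$b$ pairs all sit at small constant distances ($1$ or $2$), and collecting them should yield the additive term $n_1n_2(2n_1n_2-1)$.

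The main obstacle, and the step requiring the most care, is verifying that the claimed distance formulas are actually the \emph{shortest} paths --- in particular confirming that the third edge type (the complete bipartite attachment between $a_{i\ast}$ and $b_{i\ast}$ for each fixed $i$) never creates a shortcut that lowers a distance below what the isolated $\mathcal{G}_1$- or $\mathcal{G}_2$-structure predicts. Concretely I must check that inter-cluster $b$--$b$ distances (for $i\neq j$) route through the $a$-layer and equal $d_{\mathcal{G}_1}(u_i,u_j)+2$ or similar, and that no path exploiting the universal $a$-clusters shortens an intra-$\mathcal{G}_2$ distance. I would handle this by a short case analysis confirming that adjacency in $\mathcal{G}_2$ already gives distance $1$ and every non-adjacent pair within a $b$-cluster is at distance $2$ both through $\mathcal{G}_2$ and through the $a$-layer, so the formula $W(\mathcal{G}_2)$ is unaffected.

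Finally, I would assemble the three block contributions, using $W(\mathcal{G})=\tfrac12\mathbf{1}^T D(\mathcal{G})\mathbf{1}$, and simplify. The bookkeeping of how many vertex pairs sit at distance $2$ versus distance $1$ is where arithmetic errors are most likely, so I would cross-check the total against the known pair count $\binom{2n_1n_2}{2}$ to ensure every unordered pair is counted exactly once, then confirm the closed form matches $4n_2^2W(\mathcal{G}_1)+n_1W(\mathcal{G}_2)+n_1n_2(2n_1n_2-1)$.
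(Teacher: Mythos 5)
Your overall approach is the same as the paper's: determine every pairwise distance in $\mathcal{G}_1\circledast\mathcal{G}_2$ by a case analysis on the four kinds of vertex pairs and then sum them via $W=\tfrac12\mathbf{1}^TD\mathbf{1}$ (the paper merely packages this same case analysis as an explicit block distance matrix). The distance formulas in your first and third paragraphs agree with the paper's blocks: $d(a_{ik},a_{jl})=d_{\mathcal{G}_1}(u_i,u_j)$ for $i\neq j$, $d(a_{ik},a_{il})=2$, $d(a_{ik},b_{jl})=d_{\mathcal{G}_1}(u_i,u_j)+1$ for $i\neq j$ and $1$ for $i=j$, and $d(b_{ik},b_{jl})=d_{\mathcal{G}_1}(u_i,u_j)+2$ for $i\neq j$. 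But your second paragraph contradicts this: cross-layer pairs and inter-cluster $b$--$b$ pairs do \emph{not} ``sit at small constant distances ($1$ or $2$)''; they carry the $\mathcal{G}_1$-distance plus $1$ or plus $2$, and it is exactly these blocks that contribute $2n_2^2W(\mathcal{G}_1)$ and $n_2^2W(\mathcal{G}_1)$, which together with the $n_2^2W(\mathcal{G}_1)$ from the $a$--$a$ block yield the coefficient $4n_2^2$. Followed literally, your second paragraph would produce a coefficient of only $n_2^2$ on $W(\mathcal{G}_1)$ and a wrong additive constant, so in a final write-up that sentence must be replaced by the bookkeeping of your third paragraph.

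The more substantive issue is block (ii). Carrying out the case analysis you propose actually shows that for $k\neq l$, $d(b_{ik},b_{il})=\min\{d_{\mathcal{G}_2}(v_k,v_l),\,2\}$, because $b_{ik}\to a_{ip}\to b_{il}$ is always a path of length $2$. Hence the $b$-cluster is an isometric copy of $\mathcal{G}_2$ only when $\mathrm{diam}(\mathcal{G}_2)\le 2$ (and the formula also tacitly needs $\mathcal{G}_1$ connected); for instance with $\mathcal{G}_2=P_4$ the term $n_1W(\mathcal{G}_2)$ overcounts, since the pair at $\mathcal{G}_2$-distance $3$ lies at distance $2$ in the product. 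So your conclusion that ``the formula $W(\mathcal{G}_2)$ is unaffected'' is false in general --- the shortcut you set out to rule out genuinely exists. In fairness, the paper's own proof has exactly the same gap (its bottom-right block $I_{n_1}\otimes D(\mathcal{G}_2)$ is incorrect when $\mathrm{diam}(\mathcal{G}_2)>2$), so your proposal faithfully reproduces the paper's argument, hidden assumption included; but a correct treatment must either state $\mathrm{diam}(\mathcal{G}_2)\le 2$ as a hypothesis or replace $D(\mathcal{G}_2)$ by the capped distances $\min\{d_{\mathcal{G}_2},2\}$ throughout.
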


        \begin{proof}
            Following the definition~\ref{Def1} of $\mathcal{G}_1\circledast \mathcal{G}_2$, we have
            $D(\mathcal{G}_1\circledast \mathcal{G}_2)=$ $$\begin{bmatrix} 
                (I_{n_2}\otimes D(\mathcal{G}_1))+(J_{n_2}-I_{n_2})\otimes (2I_{n_1}+D(\mathcal{G}_1)) & \textbf{1}_{n_2}^T\otimes(D(\mathcal{G}_1+J_{n_1}))\otimes \textbf{1}_{n_2}\\
                \textbf{1}_{n_2}\otimes(D(\mathcal{G}_1+J_{n_1}))\otimes \textbf{1}_{n_2}^T & I_{n_1}\otimes D(\mathcal{G}_2)+\{2(J_{n_1}-I_{n_1})+D(\mathcal{G}_1)\}\otimes J_{n_2}
            \end{bmatrix}.$$
            Then, using the formula mentioned above, we have,
            \begin{equation*}
                \begin{split}
                    W(\mathcal{G}_1\circledast\mathcal{G}_2)&=\frac{1}{2}\big[\textbf{1}_{2n_1n_2}^TD(\mathcal{G}_1\circledast\mathcal{G}_2)\textbf{1}_{2n_1n_2}\big]\\
                    &=\frac{1}{2}\big[n_2\cdot2W(\mathcal{G}_1)+n_2(n_2-1)\cdot(2n_1+2W(\mathcal{G}_1))+2n_2^2(2W(\mathcal{G}_1)+n_1^2)+n_1\cdot2W(\mathcal{G}_2)\\ &+\{2n_1(n_1-1)+2W(\mathcal{G}_1)\}n_2\big]\\
                    &=4n_2W(\mathcal{G}_1)+n_1W(\mathcal{G}_2)+n_1n_2(2n_1n_2-1).
                \end{split}
            \end{equation*}
        \end{proof}

    \subsection{Integral Graph}
    \noindent In this section, we discuss the condition for the graph product to be Laplacian and signless Laplacian Integral.

    \begin{theorem}
        Let $i=1,2$, $\mathcal{G}_i$ be two $r_i$-regular graph with $n_i$ vertices. Then $\mathcal{G}_1\circledast \mathcal{G}_2$ is Laplacian integral graph if and only if $\mathcal{G}_2$ is Laplacian Integral and for $i=1,2,\ldots,n_1$, the roots of the polynomial $\big[x-n_2-n_2\mu_{1i}-n_2\chi_{L_{\mathcal{G}_2}}(x-n_2)\big]$ are integers.
    \end{theorem}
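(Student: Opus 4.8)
The plan is to leverage the factored form of the Laplacian characteristic polynomial established in Theorem~\ref{Th3}, namely
\begin{equation*}
f(L_{\mathcal{G}_1\circledast \mathcal{G}_2},x)=(x-r_1n_2-n_2)^{n_1(n_2-1)}f(L_{\mathcal{G}_2},(x-n_2))^{n_1}\cdot \prod_{i=1}^{n_1}\big[x-n_2-n_2\mu_{1i}-n_2\chi_{L_{\mathcal{G}_2}}(x-n_2)\big],
\end{equation*}
and to argue that the full spectrum is Laplacian integral precisely when each of the three factors contributes only integer roots. Since $\mathcal{G}_1\circledast\mathcal{G}_2$ is Laplacian integral if and only if every root of $f(L_{\mathcal{G}_1\circledast \mathcal{G}_2},x)$ is an integer, I would analyze the three factors separately and show their root sets partition (up to the cancellation of the pole of $\chi_{L_{\mathcal{G}_2}}$) the full multiset of Laplacian eigenvalues.

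First I would treat the easy factors. The factor $(x-r_1n_2-n_2)^{n_1(n_2-1)}$ contributes the single root $n_2(r_1+1)$, which is automatically an integer since $n_2,r_1\in\mathbb{Z}$, so this factor imposes no condition. The middle factor $f(L_{\mathcal{G}_2},(x-n_2))^{n_1}$ has roots exactly at $x=n_2+\mu_{2j}$ for the Laplacian eigenvalues $\mu_{2j}$ of $\mathcal{G}_2$; these are all integers if and only if every $\mu_{2j}$ is an integer, i.e.\ if and only if $\mathcal{G}_2$ is Laplacian integral. This is where the ``only if'' direction for $\mathcal{G}_2$ comes from, and I would note one subtlety: as observed in the spanning-tree and Kirchhoff computations, the root corresponding to $\mu_{21}=0$ is cancelled by the pole of $\chi_{L_{\mathcal{G}_2}}(x-n_2)$ at $x=n_2$, so the bookkeeping must account for which roots survive, but this does not affect the integrality criterion.

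The third factor $\prod_{i=1}^{n_1}\big[x-n_2-n_2\mu_{1i}-n_2\chi_{L_{\mathcal{G}_2}}(x-n_2)\big]$ is the heart of the statement, and I would simply impose its integrality as the stated hypothesis. For each fixed $i$, the expression $x-n_2-n_2\mu_{1i}-n_2\chi_{L_{\mathcal{G}_2}}(x-n_2)$ becomes, after clearing the denominator of the rational coronal $\chi_{L_{\mathcal{G}_2}}(x-n_2)$, a genuine polynomial in $x$ (for instance, in the all-regular case $\chi_{L_{\mathcal{G}_2}}(x-n_2)=n_2/(x-n_2)$ yields the quadratic $x^2-n_2(2+\mu_{1i})x+n_2^2\mu_{1i}$ appearing in equation~\eqref{Eqn5}), and its roots must be integers. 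The clean way to phrase the equivalence is: assuming $\mathcal{G}_2$ Laplacian integral handles factors one and two, the product graph is Laplacian integral if and only if, in addition, each polynomial $\big[x-n_2-n_2\mu_{1i}-n_2\chi_{L_{\mathcal{G}_2}}(x-n_2)\big]$ has only integer roots.

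The main obstacle I anticipate is the pole--root cancellation between the middle factor and the third factor: since $\chi_{L_{\mathcal{G}_2}}(x-n_2)$ has poles exactly at the shifted Laplacian eigenvalues $x=n_2+\mu_{2j}$, one must verify that the product $f(L_{\mathcal{G}_2},(x-n_2))^{n_1}\cdot\prod_i[\cdots]$ has no spurious poles or cancellations that would remove an integer root or, worse, reinstate a non-integer one. Concretely, I would clear denominators uniformly, track the total degree against the known $2n_1n_2$ eigenvalues, and confirm that the surviving roots are exactly those enumerated in the Kirchhoff-index proof. Establishing this careful accounting—rather than any deep new idea—is the crux, after which the biconditional follows by noting that integrality of the whole spectrum is equivalent to integrality of each surviving root, which in turn is equivalent to the two stated conditions.
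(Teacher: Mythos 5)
Your proposal is correct and follows exactly the paper's approach: the paper's entire proof is the single line ``The proof is obvious from Theorem~\ref{Th3},'' i.e.\ read off the integrality conditions from the three factors of the Laplacian characteristic polynomial, which is precisely what you do. Your additional bookkeeping (the automatic integrality of $n_2(r_1+1)$, the pole--root cancellation at $x=n_2$, and clearing the denominator of the coronal to get a genuine polynomial) is a careful elaboration of details the paper leaves implicit, not a different route.
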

    \begin{proof}
        The proof is obvious from Theorem~\ref{Th3}.
    \end{proof}

    \begin{theorem}
        Let $i=1,2$, $\mathcal{G}_i$ be two graph with $n_i$ vertices. Then $\mathcal{G}_1\circledast \mathcal{G}$ is signless Laplacian integral graph if and only if $\mathcal{G}_2$ is signless Laplacian Integral and for $i=1,2,\ldots,n_1$, the roots of the polynomial $\big[x-n_2-n_2\nu_{1i}-n_2\chi_{Q_{\mathcal{G}_2}}(x-n_2)\big]$ are integers.
    \end{theorem}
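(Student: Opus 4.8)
The plan is to read the complete signless Laplacian spectrum of $\mathcal{G}_1\circledast\mathcal{G}_2$ directly off the factored characteristic polynomial supplied by Theorem~\ref{Th4}, and then argue that integrality of the whole spectrum is equivalent to the conjunction of the two stated conditions. Concretely, Theorem~\ref{Th4} gives
\[
f(Q_{\mathcal{G}_1\circledast \mathcal{G}_2},x)=(x-r_1n_2-n_2)^{n_1(n_2-1)}\,f(Q_{\mathcal{G}_2},x-n_2)^{n_1}\cdot \prod_{i=1}^{n_1}\big[x-n_2-n_2\nu_{1i}-n_2\chi_{Q_{\mathcal{G}_2}}(x-n_2)\big],
\]
so the $2n_1n_2$ eigenvalues split into three families, and I would treat each family in turn.

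First I would dispose of the two easy factors. The factor $(x-r_1n_2-n_2)^{n_1(n_2-1)}$ contributes the single value $n_2(r_1+1)$ with multiplicity $n_1(n_2-1)$; since $n_2$ and $r_1$ are non-negative integers this eigenvalue is automatically an integer and imposes no condition. The factor $f(Q_{\mathcal{G}_2},x-n_2)^{n_1}$ has roots $n_2+\nu_{2j}$ for $j=1,\dots,n_2$, each of multiplicity $n_1$; because $n_2\in\mathbb{Z}$, these are all integers if and only if every signless Laplacian eigenvalue $\nu_{2j}$ of $\mathcal{G}_2$ is an integer, i.e.\ exactly when $\mathcal{G}_2$ is signless Laplacian integral. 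This is the first stated condition. The third factor contributes, for each $i$, the roots of $\big[x-n_2-n_2\nu_{1i}-n_2\chi_{Q_{\mathcal{G}_2}}(x-n_2)\big]$, and by definition these being integers is precisely the second stated condition. Collecting the three families then yields the equivalence: the entire spectrum is integral if and only if both conditions hold.

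The step I expect to be the main obstacle is the bookkeeping that makes the phrase ``all roots are integers'' decompose cleanly into the two conditions. The coronal $\chi_{Q_{\mathcal{G}_2}}(x-n_2)$ is a rational function of $x$ whose poles occur exactly at the shifted eigenvalues $n_2+\nu_{2j}$, so when one clears denominators in the $i$-th term of the third factor to obtain a genuine polynomial, certain poles cancel against roots coming from $f(Q_{\mathcal{G}_2},x-n_2)^{n_1}$ — the same pole--zero cancellation already observed for the Laplacian case in the Kirchhoff-index computation, where $n_2+\mu_{21}$ fails to be a root. I would therefore verify that the partition of the $2n_1n_2$ eigenvalues into the three families is exact, with correct multiplicities and with no root double-counted or lost through such cancellation. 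Once this accounting is confirmed, as it is in the derivation of Theorem~\ref{Th4}, the characterization follows immediately, in complete parallel with the Laplacian integral theorem.
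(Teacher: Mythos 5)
Your proposal takes exactly the paper's approach: the paper's entire proof is the single sentence that the result is ``obvious from Theorem~\ref{Th4}'', and your argument is simply a careful expansion of that same idea --- reading the three families of eigenvalues off the factored characteristic polynomial, noting that the family $n_2(r_1+1)$ is automatically integral, and matching the remaining two families to the two stated conditions. If anything, your attention to the pole--zero cancellation between the denominators of $\chi_{Q_{\mathcal{G}_2}}(x-n_2)$ and the factor $f(Q_{\mathcal{G}_2},x-n_2)^{n_1}$ is more careful than anything written in the paper itself.
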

    \begin{proof}
        The proof is obvious from Theorem~\ref{Th4}.
    \end{proof}

\section*{Acknowledgement}
        We want to acknowledge the National Institute of Technology Sikkim for awarding Bishal Sonar a doctoral fellowship.

    \bibliographystyle{abbrv}
    \bibliography{main.bib}

\end{document}